\title[Log canonical inversion of adjunction]
{Log canonical inversion of adjunction} 
\author{Osamu Fujino}
\date{2023/8/7, version 0.15}
\subjclass[2010]{Primary 14E30; Secondary 14N30; 32S05}
\keywords{inversion of adjunction, adjunction, log canonical 
singularities, complex analytic spaces}
\address{Department of 
Mathematics, Graduate School of Science, 
Kyoto University, Kyoto 606-8502, Japan}
\email{fujino@math.kyoto-u.ac.jp}
\DeclareMathOperator{\Diff}{Diff}
\newtheorem{thm}{Theorem}[section]
\newtheorem{lem}[thm]{Lemma}
\newtheorem*{claim}{Claim}
\theoremstyle{definition}
\newtheorem{rem}[thm]{Remark}
\newtheorem*{ack}{Acknowledgments}  
\newtheorem{step}{Step}
\newtheorem{say}[thm]{}
\begin{document}

\maketitle 

\begin{abstract}
This is a short note on the log canonical 
inversion of adjunction. 
\end{abstract}

\section{Introduction}\label{a-sec1}

The following theorem is Kawakita's inversion of adjunction on 
log canonicity (see \cite[Theorem]{kawakita}). 
Although \cite[Theorem]{kawakita} is formulated and proved only for 
algebraic varieties, 
his clever and mysterious 
proof in \cite{kawakita} works in the complex analytic setting. 
Here we will prove it as an application of 
the minimal model theory for projective morphisms 
of complex analytic spaces established in \cite{fujino-minimal} 
following the argument in \cite{hacon} with some suitable 
modifications. 
Our proof is more geometric than Kawakita's. 

\begin{thm}[{Log canonical inversion of adjunction, 
see \cite[Theorem]{kawakita}}]\label{a-thm1.1}
Let $X$ be a normal complex variety and let $S+B$ be an 
effective $\mathbb R$-divisor on $X$ such that 
$K_X+S+B$ is $\mathbb R$-Cartier, $S$ is reduced, and 
$S$ and $B$ have no common irreducible components. 
Let $\nu\colon S^\nu\to S$ be the normalization with 
$K_{S^\nu}+B_{S^\nu}=\nu^*(K_X+S+B)$, where 
$B_{S^\nu}$ denotes Shokurov's different. Then 
$(X, S+B)$ is log canonical in a neighborhood 
of $S$ if and only if $(S^\nu, B_{S^\nu})$ is log 
canonical.  
\end{thm}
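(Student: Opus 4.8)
\medskip

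\noindent\textbf{Proof plan.}
The ``only if'' direction is the standard adjunction for log canonical pairs and goes through without change over the complex analytic category, so I would argue the converse by contradiction. Suppose $(S^{\nu},B_{S^{\nu}})$ is log canonical but $(X,S+B)$ is not log canonical in any neighborhood of $S$, so that the non-lc locus $\operatorname{Nlc}(X,S+B)$ meets $S$. After choosing $x\in\operatorname{Nlc}(X,S+B)\cap S$ and replacing $X$ by a relatively compact Stein neighborhood of $x$, the minimal model theory of \cite{fujino-minimal} is available for projective morphisms over $X$. I would then take a projective log resolution $f\colon Y\to X$ of $(X,S+B)$ for which $T:=f^{-1}_{*}S$ is smooth and $f|_{T}$ factors through $\nu$, inducing $h\colon T\to S^{\nu}$; here $f$ is an isomorphism over a neighborhood of each generic point of $S$, since there $S+B=S$ is a smooth divisor in a smooth open subset of $X$. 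Writing $K_{Y}+\Delta_{Y}=f^{*}(K_{X}+S+B)$ and decomposing $\Delta_{Y}=\Gamma-A+F$, where $\Gamma\ge 0$ has coefficients in $[0,1]$ and contains $T$ with coefficient $1$, $A\ge 0$ is $f$-exceptional, and $F\ge 0$ satisfies $\operatorname{Supp}F\subseteq\lfloor\Gamma\rfloor$, the pair $(Y,\Gamma)$ is log smooth and
\[
K_{Y}+\Gamma+F=f^{*}(K_{X}+S+B)+A .
\]
The failure of log canonicity of $(X,S+B)$ near $S$ means precisely that some component of $F$ has image meeting $S$.

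Next I would run a $(K_{Y}+\Gamma)$-MMP over $X$ with scaling, using \cite{fujino-minimal}. It terminates; since $f$ is birational, it cannot end with a Mori fibre space, so I obtain a minimal model $g\colon Z\to X$ of $(Y,\Gamma)$ over $X$, on which $(Z,\Gamma_{Z})$ is $\mathbb{Q}$-factorial dlt and $K_{Z}+\Gamma_{Z}$ is nef over $X$. From $K_{Z}+\Gamma_{Z}=g^{*}(K_{X}+S+B)+A_{Z}-F_{Z}$, with $A_{Z}$ effective and $g$-exceptional and sharing no component with $F_{Z}$, the negativity lemma forces $A_{Z}=0$. Hence $K_{Z}+\Gamma_{Z}+F_{Z}=g^{*}(K_{X}+S+B)$ with $\operatorname{Supp}F_{Z}\subseteq\lfloor\Gamma_{Z}\rfloor$, the pair $(Z,\Gamma_{Z}+F_{Z})$ is crepant to $(X,S+B)$, and $\operatorname{Nlc}(X,S+B)=g(\operatorname{Supp}F_{Z})$. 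Because the MMP is an isomorphism near the generic points of $S$, the strict transform $T_{Z}$ of $T$ is not contracted; it is a normal component of $\lfloor\Gamma_{Z}\rfloor$, and $h$ extends to a projective birational morphism $h_{Z}\colon T_{Z}\to S^{\nu}$ with $\nu\circ h_{Z}=g|_{T_{Z}}$.

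The heart of the matter is then to ensure that $F_{Z}$ actually meets $T_{Z}$. Using that $-(K_{Z}+\Gamma_{Z}+F_{Z})=-g^{*}(K_{X}+S+B)$ is numerically trivial over $X$, I would invoke the connectedness lemma along the fibres of $g$ together with special termination for the dlt pair $(Z,\Gamma_{Z})$, running if necessary a further $(K_{Z}+\Gamma_{Z})$-MMP over $X$ that concentrates $\operatorname{Supp}F_{Z}$ onto the minimal lc stratum of $(Z,\Gamma_{Z})$ lying over $x$ and replacing $Z$ by the resulting model, so as to reach a model on which $\operatorname{Supp}F_{Z}\cap T_{Z}\neq\varnothing$. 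Since $T$ has coefficient $1$, and not $>1$, in $\Delta_{Y}$, the divisor $T_{Z}$ is not a component of $F_{Z}$, so I may restrict the crepant equation to the normal variety $T_{Z}$; with $\Gamma_{T_{Z}}:=\operatorname{Diff}_{T_{Z}}(\Gamma_{Z}-T_{Z})$, and using $\nu\circ h_{Z}=g|_{T_{Z}}$ together with the defining equation of Shokurov's different, adjunction for dlt pairs gives
\[
K_{T_{Z}}+\Gamma_{T_{Z}}+F_{Z}|_{T_{Z}}=h_{Z}^{*}(K_{S^{\nu}}+B_{S^{\nu}}),
\]
with $F_{Z}|_{T_{Z}}\neq 0$.

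Finally, any prime component $P$ of the nonzero effective divisor $F_{Z}|_{T_{Z}}$ lies beneath a coefficient-$1$ component of $\lfloor\Gamma_{Z}\rfloor$, hence has coefficient $1$ in $\Gamma_{T_{Z}}$, while its coefficient in $F_{Z}|_{T_{Z}}$ is positive; thus $P$ occurs with coefficient $>1$ in the crepant pullback of $B_{S^{\nu}}$ to $T_{Z}$, so $(S^{\nu},B_{S^{\nu}})$ cannot be log canonical, which is the desired contradiction. I expect the third step to be the real obstacle: upgrading ``$\operatorname{Nlc}(X,S+B)$ meets $S$'' to ``the non-log-canonical divisor meets the strict transform of $S$ on a suitable model'' is not a formal consequence of choosing a good log resolution, and this is exactly where the minimal model program of \cite{fujino-minimal}, together with connectedness and special termination in the complex analytic category, does the essential work; the remaining steps are bookkeeping with discrepancies and standard adjunction for dlt pairs.
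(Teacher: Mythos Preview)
Your outline has a genuine gap at exactly the place you flagged, and it is fatal rather than merely technical. Under the hypothesis that $(S^{\nu},B_{S^{\nu}})$ is log canonical, on \emph{any} $\mathbb{Q}$-factorial dlt model $(Z,\Gamma_Z)$ with $K_Z+\Gamma_Z+F_Z=g^{*}(K_X+S+B)$ and $T_Z\subset\lfloor\Gamma_Z\rfloor$, one has $T_Z\cap\operatorname{Supp}F_Z=\varnothing$. Indeed, if they met, adjunction would give a coefficient $>1$ in $\Diff_{T_Z}(\Gamma_Z-T_Z+F_Z)$, and since $K_{T_Z}+\Diff_{T_Z}(\Gamma_Z-T_Z+F_Z)=h_Z^{*}(K_{S^{\nu}}+B_{S^{\nu}})$ this contradicts the assumed log canonicity of $(S^{\nu},B_{S^{\nu}})$. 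So no further MMP, connectedness, or special-termination trick can ever produce a model with $T_Z\cap F_Z\neq\varnothing$; the target of your third step is simply false. Connectedness only tells you that $\lfloor\Gamma_Z\rfloor$ is connected in each fibre over $x$, which links $T_Z$ to $F_Z$ through \emph{other} lc centres, not directly. A secondary problem is that you assert the $(K_Y+\Gamma)$-MMP with scaling terminates; for dlt (non-klt) pairs this is open even in the algebraic case, and the paper explicitly says so.

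The paper's argument embraces the disjointness $T_i\cap\Sigma_i=\varnothing$ rather than fighting it. Running the same MMP with scaling (without claiming termination), it uses this disjointness together with Kawamata--Viehweg vanishing to show that for all sufficiently divisible $m$ the restriction
\[
(\mu_{i_0})_{*}\mathcal{O}_{Y_{i_0}}(-m\Sigma_{i_0}-aE_{i_0})\longrightarrow (\mu_{i_0})_{*}\mathcal{O}_{T_{i_0}}(-aE_{i_0})
\]
is surjective near $P$. But if $P\in\mu_{i_0}(\Sigma_{i_0})$, Zariski's subspace theorem forces the left-hand side into arbitrarily high powers of $\mathfrak{m}_P$ as $m\to\infty$, so the fixed nonzero target must be $(0)$, a contradiction. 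In short, the contradiction is obtained sheaf-theoretically via vanishing and Zariski's subspace theorem, not by forcing the non-lc divisor to touch the strict transform of $S$; that is the idea your outline is missing.
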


We note that $X$ is not necessarily an algebraic 
variety in Theorem \ref{a-thm1.1}. 
It is only a complex analytic space. 
In this note, we will freely use \cite{fujino-minimal} 
and \cite{banica}. 
We assume that the reader is familiar with 
the basic definitions and results of the minimal model 
theory for algebraic varieties (see, 
for example, \cite{kollar-mori}, \cite{bchm}, 
\cite{fujino-fundamental}, \cite{fujino-foundations}, and so on). 

\section{Quick review of the analytic MMP}\label{a-sec2}
In this section, we quickly explain the minimal model theory 
for projective morphisms between complex analytic spaces 
established in \cite{fujino-minimal}. 

\begin{say}[Singularities of pairs]\label{a-say2.1}
As in the algebraic case, we can define {\em{kawamata log terminal pairs}}, 
{\em{log canonical pairs}}, {\em{purely log terminal pairs}}, 
{\em{divisorial log terminal 
pairs}}, and so on, for complex analytic spaces. 
For the details, see \cite[Section 3]{fujino-minimal}. 
\end{say}

One of the main contributions of \cite{fujino-minimal} is 
to find out a suitable complex analytic formulation in order 
to make the original proof of \cite{bchm} 
work with only some minor modifications.

\begin{say}\label{a-say2.2}
Let $\pi\colon X\to Y$ be a projective morphism between 
complex analytic spaces. A compact subset of an analytic 
space is said to be {\em{Stein compact}} 
if it admits a fundamental system of Stein open 
neighborhoods. 
It is well known that if $W$ is a Stein compact 
semianalytic subset of $Y$ then 
$\Gamma (W, \mathcal O_Y)$ is noetherian. 
From now on, we fix a Stein compact subset $W$ of $Y$ such that 
$\Gamma (W, \mathcal O_Y)$ is noetherian. 
Then we can formulate and prove the cone and contraction 
theorem 
over some open neighborhood of $W$ as in the usual 
algebraic case. 
This is essentially due to Nakayama (see \cite{nakayama}). 
We say that $X$ is {\em{$\mathbb Q$-facotrial over $W$}} 
if every prime divisor defined on an open neighborhood of 
$\pi^{-1}(W)$ is $\mathbb Q$-Cartier at any point $x\in \pi^{-1}(W)$. 
Then, in \cite{fujino-minimal}, we show that we can 
translate almost all the results in \cite{bchm} 
into the above analytic setting 
suitably (see \cite[Section 1]{fujino-minimal}). 
\end{say}

Hence we have the minimal model program with ample scaling 
as in the algebraic case. In Section 
\ref{a-sec4}, we will use it in the proof of Theorem 
\ref{a-thm1.1}. 

\begin{say}[Minimal model program with ample scaling]\label{a-say2.3}
Let $(X, \Delta)$ be a divisorial log terminal pair such 
that $X$ is $\mathbb Q$-factorial over $W$ and let $C\geq 0$ 
be a $\pi$-ample 
$\mathbb R$-divisor on $X$ such that 
$(X, \Delta+C)$ is log canonical and that $K_X+\Delta+C$ 
is nef over $W$. 
Then we can run the {\em{$(K_X+\Delta)$-minimal 
model program with scaling of $C$ over $Y$ around $W$}} from 
$(X_0, \Delta_0):=(X, \Delta)$ as 
in the algebraic case. We put $C_0:=C$. 
Thus we get a sequence of flips and 
divisorial contractions 
\begin{equation*}
(X_0, \Delta_0)\overset{\phi_0}{\dashrightarrow} (X_1, 
\Delta_1)
\overset{\phi_1}{\dashrightarrow} \cdots 
\overset{\phi_{i-1}}{\dashrightarrow} 
(X_i, \Delta_i) \overset{\phi_i}{\dashrightarrow} (X_{i+1}, 
\Delta_{i+1})
\overset{\phi_{i+1}}{\dashrightarrow} \cdots 
\end{equation*} 
over $Y$ 
with $\Delta_i:=(\phi_{i-1})_*\Delta_{i-1}$ and  
$C_i:=(\phi_{i-1})_*C_{i-1}$ for every $i\geq 1$. 
We note that each step $\phi_i$ exists only after shrinking 
$Y$ around $W$ suitably. 
We also note that 
\begin{equation*}
\lambda_i:=\inf \{\mu\in \mathbb R_{\geq 0} \mid 
{\text{$K_{X_i}+\Delta_i+\mu C_i$ is nef over $W$}}
\}
\end{equation*} 
and that each step $\phi_i$ is induced by a $(K_{X_i}+\Delta_i)$-negative 
extremal ray $R_i$ such that 
$(K_{X_i}+\Delta_i+\lambda_i C_i)\cdot R_i=0$. 
We have 
\begin{equation*}
\lambda_{-1}:=1\geq \lambda_0\geq \lambda _1\geq \cdots   
\end{equation*} 
such that this sequence is  
\begin{itemize}
\item finite with $\lambda_{N-1}>\lambda_N=0$, or 
\item infinite with $\lim _{i\to \infty} \lambda_i=0$.  
\end{itemize}
Of course, it is conjectured that the above minimal 
model program always terminates after finitely many 
steps. Unfortunately, however, it is still widely open 
even when $\pi\colon X\to Y$ is algebraic. 
\end{say} 

Anyway, for the details of the minimal model theory 
for projective morphisms of complex analytic spaces, 
see \cite{fujino-minimal}. 

\section{Zariski's subspace theorem}\label{a-sec3} 

In this short section, we quickly review 
{\em{Zariski's subspace theorem}} 
following \cite{abhyankar}. 

\begin{say}[{see \cite[(1.1)]{abhyankar}}]\label{a-say3.1}
Let $R_1$ and $R_2$ be noetherian local rings. 
Then we 
say that $R_2$ {\em{dominates}} $R_1$ 
if $R_1$ is a subring of $R_2$ and 
$m_{R_1}\subset m_{R_2}$ holds, where $m_{R_1}$ (resp.~$m_{R_2}$) 
is the maximal ideal of $R_1$ (resp.~$R_2$). 
\end{say}

\begin{say}[{see \cite[(1,1)]{abhyankar}}]\label{a-say3.2} 
Let $R_1$ and $R_2$ be noetherian local rings such that 
$R_1$ is a subring of $R_2$. 
We say that $R_1$ is a {\em{subspace}} of $R_2$ if 
$R_1$ with its Krull topology is a 
subspace of $R_2$ with its Krull topology. 
This means that $R_2$ dominates $R_1$ and 
there exists a sequence of non-negative 
integers $a(n)$ such that $a(n)$ tends to 
infinity with $n$ and $R_1\cap m^n_{R_2}
\subset m^{a(n)}_{R_1}$ holds for 
every $n\geq 0$. 
\end{say}

\begin{say}[{see \cite[(1.1)]{abhyankar}}]\label{a-say3.3} 
Let $R_1$ and $R_2$ be noetherian local domains such that 
$R_1$ is a subring of $R_2$.  
Then $\mathrm{trdeg}_{R_1}R_2$ denotes 
the {\em{transcendence degree}} of the quotient field 
of $R_2$ over 
the quotient field of $R_1$. 
Let $h\colon R_2\to R_2/m_{R_2}$ be the canonical surjection, 
where $m_{R_2}$ is the maximal ideal of $R_2$. 
Let $k$ be the quotient field of $h(R_1)$ in $h(R_2)$. 
Then $\mathrm{trdeg}_kh(R_2)$ is called the {\em{residual 
transcendence degree}} of $R_2$ over $R_1$ and 
is denoted by $\mathrm{restrdeg}_{R_1}R_2$.   
\end{say}

We need the following form of Zariski's subspace theorem. 

\begin{thm}[{see, for example, 
\cite[(10.13).~A form of ZST]{abhyankar}}]\label{a-thm3.4}
Let $R_1$ and $R_2$ be noetherian local domains 
such that $R_1$ is analytically irreducible, $R_2$ dominates $R_1$, 
$\mathrm{trdeg}_{R_1} R_2<\infty$, and $\dim R_1+\mathrm{trdeg}_{R_1}R_2
=\dim R_2+\mathrm{restrdeg} _{R_1} R_2$. 
Then $R_1$ is a subspace of $R_2$. 
\end{thm}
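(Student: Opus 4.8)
Since this is a classical theorem of Abhyankar (see \cite[(10.13)]{abhyankar}), I only describe how I would organize the argument; the genuinely hard input is isolated at the end.

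The first step is the standard reduction to completions. Because $R_1$ is analytically irreducible, its $m_{R_1}$-adic completion $\widehat{R_1}$ is again a noetherian local domain. Let $\widehat{R_2}$ be the $m_{R_2}$-adic completion of $R_2$ and let $\psi\colon\widehat{R_1}\to\widehat{R_2}$ be the induced local homomorphism. For $n\ge 0$ put $J_n:=R_1\cap m_{R_2}^n$, a descending chain of ideals of $R_1$; since $R_2$ already dominates $R_1$ by hypothesis, the assertion that $R_1$ is a subspace of $R_2$ is, by \ref{a-say3.2}, equivalent to: for every $k$ there is an $n$ with $J_n\subset m_{R_1}^k$. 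As $J_n\subset m_{R_2}^n$, the homomorphism $\psi$ carries $J_n\widehat{R_1}$ into $m_{R_2}^n\widehat{R_2}$, whence
\[
\bigcap_{n\ge 0}J_n\widehat{R_1}\ \subset\ \psi^{-1}\Bigl(\bigcap_{n\ge 0}m_{R_2}^n\widehat{R_2}\Bigr)\ =\ \psi^{-1}(0)\ =\ \ker\psi
\]
by Krull's intersection theorem in $\widehat{R_2}$. Therefore, once we know that $\psi$ is injective, we get $\bigcap_n J_n\widehat{R_1}=0$ in the complete noetherian local ring $\widehat{R_1}$; Chevalley's theorem on complete local rings then yields, for each $k$, an $n$ with $J_n\widehat{R_1}\subset m_{R_1}^k\widehat{R_1}$, and intersecting with $R_1$ and using that $R_1\to\widehat{R_1}$ is faithfully flat (so that $m_{R_1}^k\widehat{R_1}\cap R_1=m_{R_1}^k$) gives $J_n\subset m_{R_1}^k$. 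This proves the theorem modulo the injectivity of $\psi$.

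To prove that $\psi$ is injective I would argue by contradiction, and this is where the dimension hypothesis enters. Assume $\mathfrak p:=\ker\psi\ne 0$. Since $\widehat{R_1}$ is a complete, hence catenary, local domain, every nonzero prime has height at least one, so $\dim\widehat{R_1}/\mathfrak p\le\dim\widehat{R_1}-1=\dim R_1-1$. Choose a minimal prime $\mathfrak q$ of $\widehat{R_2}$ with $\dim\widehat{R_2}/\mathfrak q=\dim\widehat{R_2}=\dim R_2$; then $\psi$ induces an injective local homomorphism of complete local domains $A:=\widehat{R_1}/\psi^{-1}(\mathfrak q)\hookrightarrow B:=\widehat{R_2}/\mathfrak q$, and $\psi^{-1}(\mathfrak q)\supset\mathfrak p\ne 0$ forces $\dim A\le\dim R_1-1$. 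Completion and passage to these minimal primes leave the residue fields unchanged, so $\mathrm{restrdeg}_A B=\mathrm{restrdeg}_{R_1}R_2$. Applying the dimension inequality $\dim B+\mathrm{restrdeg}_A B\le\dim A+\mathrm{trdeg}_A B$ for a dominant inclusion of noetherian local domains, and combining it with the hypothesis $\dim R_1+\mathrm{trdeg}_{R_1}R_2=\dim R_2+\mathrm{restrdeg}_{R_1}R_2$, we obtain
\[
\mathrm{trdeg}_A B\ \ge\ \dim R_2+\mathrm{restrdeg}_{R_1}R_2-\dim A\ \ge\ \mathrm{trdeg}_{R_1}R_2+1 .
\]

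The main obstacle is to contradict this, that is, to establish the opposite estimate $\mathrm{trdeg}_A B\le\mathrm{trdeg}_{R_1}R_2$, despite the fact that passing from $R_i$ to $\widehat{R_i}$ enlarges the fraction fields enormously. This is precisely the point at which the finiteness hypothesis $\mathrm{trdeg}_{R_1}R_2<\infty$ is indispensable, together with Cohen's structure theorem and Nagata's analysis of analytically irreducible local rings; it is essentially the whole content of Zariski's subspace theorem, and here I would simply follow the argument of \cite[(10.13)]{abhyankar}. With that estimate in hand the displayed inequality is absurd, so $\psi$ is injective and the proof is complete.
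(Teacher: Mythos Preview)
The paper does not prove Theorem~\ref{a-thm3.4} at all: immediately after the statement it says ``Here we do not prove Theorem~\ref{a-thm3.4}. For the details, see \cite[\S 10]{abhyankar}.'' So there is no proof in the paper to compare your proposal against.

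That said, your outline is a faithful sketch of the classical route: reduce the subspace condition to the vanishing of $\bigcap_n J_n\widehat{R_1}$ via Chevalley's theorem in the complete local ring $\widehat{R_1}$, and then reduce that to the injectivity of $\psi\colon\widehat{R_1}\to\widehat{R_2}$. The dimension-inequality manoeuvre you set up to attack injectivity is also standard. You are candid that the decisive inequality $\mathrm{trdeg}_A B\le\mathrm{trdeg}_{R_1}R_2$ is the real content and that you would import it from \cite[(10.13)]{abhyankar}; since that is exactly the reference the paper cites in lieu of a proof, your proposal and the paper end up leaning on the same source for the substantive step. In short: your plan is sound as far as it goes, and it neither adds to nor conflicts with anything in the paper, which simply black-boxes the theorem.
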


Here we do not prove Theorem \ref{a-thm3.4}. 
For the details, 
see \cite[\S 10]{abhyankar}. 

\section{Proof of Theorem \ref{a-thm1.1}}\label{a-sec4}

Let us prove Theorem \ref{a-thm1.1} following the argument 
in \cite{hacon}, where the log canonical inversion of adjunction 
was established for log canonical 
centers of arbitrary dimension. Our proof given below uses 
Zariski's subspace 
theorem as in \cite{kawakita}. 

\begin{proof}[Proof of Theorem \ref{a-thm1.1}]
In this proof, we will closely follow the argument 
in \cite{hacon} with some 
suitable modifications. 
If $(X, S+B)$ is log canonical in a neighborhood 
of $S$, then it is easy to 
see that $(S^\nu, B_{S^\nu})$ is log canonical by adjunction. 
Therefore, it is sufficient to prove that 
$(X, S+B)$ is log canonical near $S$ under the assumption 
that $(S^\nu, B_{S^\nu})$ is log canonical. 
Without loss of generality, we may assume that $S$ is irreducible.  
We take an arbitrary point $P\in S$. 
We can replace $X$ with a relatively 
compact Stein open neighborhood of $P$ 
since the statement is local. 
From now on, we will freely shrink $X$ around $P$ suitably 
throughout the proof without 
mentioning it explicitly.  
\setcounter{step}{0}
\begin{step}\label{a-step1} In this step, we will see that 
we can reduce the problem to the case where 
$K_X+S+B$ is $\mathbb Q$-Cartier. 

The argument here is more or less well known to the 
experts and is standard 
in the theory of minimal models. 
Hence we will only give a 
sketch of the proof. As usual, we can write 
\begin{equation*}
K_X+S+B=\sum _{p=1}^q r_p(K_X+S+B_p)
\end{equation*} 
such that $K_X+S+B_p$ is $\mathbb Q$-Cartier, 
$0<r_p<1$ for every $p$ with $\sum _{p=1}^qr_p=1$, 
and $(S^\nu, B_p^\nu)$ is log canonical 
for every $p$, 
where $K_{S^\nu}+B_p^\nu=\nu^*(K_X+S+B_p)$. 
Note that if $(X, S+B_p)$ is log canonical 
near $S$ for every $p$ then 
$(X, S+B)$ is log canonical 
in a suitable neighborhood of $S$. 
Therefore, we can replace $(X, S+B)$ with $(X, S+B_p)$ and assume 
that $K_X+S+B$ is $\mathbb Q$-Cartier. 
This is what we wanted. 
\end{step}
\begin{step}\label{a-step2} 
In this step, we will make a good partial resolution of 
singularities of the pair 
$(X, S+B)$ by using the minimal model 
program established in \cite{fujino-minimal} 
(see also Section \ref{a-sec2}). 

Let $W$ be a Stein compact subset of $X$ such that 
$\Gamma(W, \mathcal O_X)$ is noetherian and 
that $W$ contains some open neighborhood of $P$. 
By \cite[Theorem 1.21]{fujino-minimal}, 
we can take a projective bimeromorphic morphism 
$\mu\colon Y\to X$ with 
$K_Y+\Delta_Y=\mu^*(K_X+S+B)$ such 
that 
\begin{itemize}
\item[(i)] $Y$ is $\mathbb Q$-factorial over $W$, 
\item[(ii)] $\Delta_Y$ is effective and 
$\Delta_Y=\sum _j d_j \Delta_j$ is the irreducible decomposition,  
\item[(iii)] the pair 
\begin{equation*}
\left(Y, \Delta'_Y:=\sum _{d_j\leq 1}d_j\Delta_j 
+\sum _{d_j>1}\Delta_j\right)
\end{equation*} is divisorial log terminal, and 
\item[(iv)] every $\mu$-exceptional divisor appears in 
$(\Delta'_Y)^{=1}:=\sum _{d_j\geq 1}\Delta_j$. 
\end{itemize} 
Note that $\mu\colon Y\to X$ is sometimes called a 
dlt blow-up of $(X, S+B)$ in the literature 
(see \cite[Theorem 1.21]{fujino-minimal}). 
We write $\Delta'_Y=T+\Gamma$, where $T$ is the strict 
transform of $S$ and $\Gamma:=\Delta'_Y-T$, and put 
\begin{equation*}
\Sigma:=\Delta_Y-T-\Gamma
=\Delta_Y-\Delta'_Y. 
\end{equation*} 
We take an effective Cartier divisor $E$ on $Y$ such that 
$-E$ is $\mu$-ample and $K_Y+T+\Gamma-E$ is $\mu$-nef over 
$W$. We note that we can choose $E$ such 
that $E$ and $T$ have no common components.
Then we run the $(K_Y+T+\Gamma)$-minimal model 
program with scaling of $-E$ over $X$ around $W$. 
We obtain a sequence of flips and divisorial contractions: 
\begin{equation*}
\begin{split}
(Y, T+\Gamma)&=:(Y_0, T_0+\Gamma_0)\overset{\phi_0}\dashrightarrow
(Y_1, T_1+\Gamma_1)\\ 
&\overset{\phi_1}\dashrightarrow
(Y_2, T_2+\Gamma_2)
\overset{\phi_2}\dashrightarrow 
\cdots \overset{\phi_{i-1}}\dashrightarrow
(Y_i, T_i+\Gamma_i)\overset{\phi_i}\dashrightarrow
\cdots. 
\end{split}
\end{equation*}
Note that each step exists only after shrinking $X$ around 
$W$ suitably. Let $\mu_i\colon Y_i\to X$ be the induced 
morphism. 
For any divisor $G$ on $Y$, we let $G_i$ denote the pushforward 
of $G$ on $Y_i$. We put $\lambda_{-1}:=1$. 
By construction, 
there exists a non-increasing sequence of rational numbers 
$\lambda_i\geq \lambda_{i+1}$ with $i\geq 0$ that is either 
\begin{itemize}
\item finite with $\lambda_{N-1}>\lambda_N=0$, or 
\item infinite with $\lim_{i\to \infty}\lambda_i=0$
\end{itemize} 
such that $K_{Y_i}+T_i+\Gamma_i-\lambda E_i$ 
is nef over $W$ for all $\lambda_{i-1} \geq \lambda 
\geq \lambda_i$. 
Without loss of generality, we may assume that 
each $\phi_i$ is a flip for every $i\geq i_0$ or 
that $i_0=N$, that is, the minimal model program stops at $i_0=N$. 
For any positive rational number $t$, 
there is an effective $\mathbb Q$-divisor $\Theta_t$ 
on $Y$ such that $\Theta_t\sim _{\mathbb Q} \Gamma-tE$ and 
$(Y, T+\Theta_t)$ is purely log terminal with $\lfloor T+\Theta_t\rfloor 
=T$. 
In this case, we see that if $t<\lambda_{i-1}$ then $(Y_i, T_i +\Theta_{t, i})$ 
is purely log terminal. 
In particular, $(Y_i, \Theta_{t,i})$ is kawamata 
log terminal. 
\end{step}
\begin{step}\label{a-step3}
In this step, we will check that $T_i\cap \Sigma_i=\emptyset$ holds for 
every $i$. 

We note that $T_i$ is normal since 
$(Y_i, T_i+\Gamma_i)$ is a divisorial log terminal 
pair. Therefore, 
$\mu_i\colon T_i\to S$ factors through $\nu\colon S^\nu\to 
S$. 
By construction, we have $K_{Y_i}+T_i+\Gamma_i+\Sigma_i=
\mu^*_i(K_X+S+B)$. 
Hence 
\begin{equation}\label{a-eq4.1}
\begin{split}
K_{T_i}+\Diff_{T_i}(\Gamma_i+\Sigma_i)
&:=\left(K_{Y_i}+T_i+\Gamma_i+\Sigma_i\right)|_{T_i}
\\&=(\mu'_i)^*(K_{S^\nu}+B_{S^\nu}) 
\end{split}
\end{equation} 
holds, where $\mu'_i\colon T_i\to S^\nu$. 
Assume that $T_i\cap \Sigma_i$ is not empty. 
Then we see that 
$(T_i, \Diff_{T_i}(\Gamma_i+\Sigma_i))$ 
is not log canonical. 
By \eqref{a-eq4.1}, 
this is a contradiction since $(S^\nu, B_{S^\nu})$ is log 
canonical by assumption. 
This implies that $T_i\cap \Sigma_i=\emptyset$ holds 
for every $i$. In particular, 
we have 
\begin{equation*}
\begin{split}
K_{T_i}+\Diff_{T_i}(\Gamma_i+\Sigma_i)
&=\left(K_{Y_i}+T_i+\Gamma_i+\Sigma_i\right)|_{T_i}
\\ &=\left(K_{Y_i}+T_i+\Gamma_i\right)|_{T_i}
=:K_{T_i}+\Diff_{T_i}(\Gamma_i). 
\end{split}
\end{equation*}
\end{step}

\begin{step}\label{a-step4}
In this step, we will show that $\phi_i|_{T_i}\colon 
T_i\dashrightarrow T_{i+1}$ is an isomorphism 
for every $i$. 
Moreover, we will prove that if $\phi_i$ is a flip then 
$\phi_i$ is an isomorphism on some open neighborhood of $T_i$. 

First, we assume that $\phi_i$ is a flip. 
We consider the following 
flipping diagram 
\begin{equation*}
\xymatrix{
(Y_i, T_i+\Gamma_i) \ar@{-->}[rr]^-{\phi_i}
\ar[dr]_-{\varphi_i}& 
&(Y_{i+1}, T_{i+1}+\Gamma_{i+1})\ar[dl]^-{\varphi_i^+}\\\
& Z_i  &
}
\end{equation*} 
and we let $W_i$ denote the normalization of $\varphi_i(T_i)$. 
Let $C$ be any flipping curve. 
If $C$ is contained in $T_i$, 
then we obtain 
\begin{equation}\label{a-eq4.2}
(K_{Y_i}+T_i+\Gamma_i)\cdot 
C=(K_{Y_i}+T_i+\Gamma_i+\Sigma_i)\cdot 
C=0
\end{equation}
since $T_i\cap \Sigma_i=\emptyset$ by Step \ref{a-step3}. 
This is absurd. 
Hence this implies that the natural map $T_i \to W_i$ is an isomorphism. 
By the same argument, we see that the natural 
map $T_{i+1}\to W_i$ is 
also an isomorphism. 
This means that $\phi_i|_{T_i}\colon 
T_i\dashrightarrow T_{i+1}$ is an isomorphism when $\phi_i$ is a flip. 
By the above argument, we see that $T_{i+1}$ (resp.~$T_i$) does not contain 
any flipped (resp.~flipping) curves. Note that if $T_i\cdot C>0$ holds for 
some flipping curve $C$ then $-T_{i+1}$ is $\varphi^+_i$-ample. 
Hence $T_i$ is disjoint from the flipping locus. This implies that 
$\phi_i$ is an isomorphism near $T_i$ when $\phi_i$ is a flip. 

Next, we assume that $\phi_i$ is a divisorial contraction. 
In this case, $\phi_i|_{T_i}\colon T_i\dashrightarrow T_{i+1}$ is obviously 
a projective bimeromorphic morphism between normal complex varieties. 
Let $C$ be any curve contracted by $\phi_i$. 
Assume that $C$ is contained in $T_i$. 
Then, by the same computation as in 
\eqref{a-eq4.2}, we get a contradiction. 
This means that $\phi_i|_{T_i}\colon T_i\to T_{i+1}$ 
does not contract any curves. 
Thus, $\phi_i|_{T_i}\colon T_i\dashrightarrow T_{i+1}$  
is an isomorphism. 

We get the desired statement. 
\end{step}

\begin{step}\label{a-step5}
In this step, we will prove that the natural restriction 
map  
\begin{equation*}
(\mu_{i_0})_*\mathcal O_{Y_{i_0}}(-m\Sigma_{i_0}-aE_{i_0})
\to (\mu_{i_0})_*\mathcal O_{T_{i_0}}(-aE_{i_0})
\end{equation*}
is surjective over some open neighborhood of 
$P$ for every positive integer 
$m\geq a/\lambda_{i_0-1}$ such that $m\Sigma$ is 
an integral divisor, where $a$ is the smallest positive integer 
such that $aE_{i_0}$ is Cartier.  

By definition, $aE_{i_0}$ is Cartier. By Step \ref{a-step4}, 
$Y_{i_0}\dashrightarrow Y_i$ is an isomorphism 
on some open neighborhood of $T_{i_0}$ for every $i\geq i_0$. 
Therefore, 
$aE_i$ is Cartier on some open neighborhood 
of $T_i$ for every $i\geq i_0$. 
Since $(Y_i, T_i+\Gamma_i)$ is divisorial log terminal and $T_i$ 
is a $\mathbb Q$-Cartier integral divisor, we have 
the following short exact sequence: 
\begin{equation}\label{a-eq4.3}
\begin{split}
0&\to \mathcal O_{Y_i}(-m\Sigma_i-aE_i-T_i)\to 
\mathcal O_{Y_i}(-m\Sigma_i -aE_i)\\ &\to 
\mathcal O_{T_i}(-aE_i)\to 0 
\end{split}
\end{equation} 
for every $i\geq i_0$ and every $m$ such that $m\Sigma_i$ 
is integral (cf.~\cite[Proposition 5.26]{kollar-mori}). 
Here, we used the fact that $T_i\cap \Sigma_i=\emptyset$ 
(see Step \ref{a-step3}). 
Let $U$ be an open neighborhood of $P$ contained in $W$. 
For every positive integer $m\geq a$ such that $m\Sigma$ is an integral 
divisor, there exists $i$ such that $\lambda_{i-1}\geq a/m \geq \lambda_i$. 
If further $m\geq a/\lambda_{i_0-1}$, 
then $i\geq i_0$. Since 
\begin{equation*}
-m\Sigma_i-aE_i-T_i\sim _{\mathbb Q, \mu_i} K_{Y_i}+\Theta_{\frac{a}{m}, i} 
+(m-1) \left(K_{Y_i}+T_i+\Gamma_i -\frac{a}{m}E_i\right), 
\end{equation*} 
$\left(Y_i, \Theta_{\frac{a}{m}, i}\right)$ is kawamata log terminal, 
$K_{Y_i}+T_i+\Gamma_i-\frac{a}{m}E_i$ is nef over $U$, 
we obtain that 
\begin{equation}\label{a-eq4.4}
R^1(\mu_i)_*\mathcal O_{Y_i}(-m\Sigma_i -aE_i-T_i)=0
\end{equation} 
on $U$ by the Kawamata--Viehweg vanishing theorem 
for projective bimeromorphic morphisms of 
complex analytic spaces. 
Hence the natural restriction map 
\begin{equation*}
(\mu_i)_*\mathcal O_{Y_i}(-m\Sigma_i-aE_i)\to 
(\mu_i)_*\mathcal O_{T_i}(-m\Sigma_i-aE_i)
=(\mu_i)_*\mathcal O_{T_i}(-aE_i)
\end{equation*} 
is surjective on $U$ by 
\eqref{a-eq4.3} and \eqref{a-eq4.4}. 
Note that 
\begin{equation*} 
(\mu_i)_*\mathcal O_{Y_i}(-m\Sigma_i-aE_i)
=(\mu_{i_0})_*\mathcal O_{Y_{i_0}}(-m\Sigma_{i_0}-aE_{i_0})
\end{equation*}
and 
\begin{equation*}
(\mu_i)_*\mathcal O_{T_i}(-aE_i)=
(\mu_{i_0})_*\mathcal O_{T_{i_0}}(-aE_{i_0})
\end{equation*}
hold because $Y_{i_0}\dashrightarrow Y_i$ 
is an isomorphism in codimension one and $Y_{i_0}\dashrightarrow 
Y_i$ is an isomorphism on some open neighborhood 
of $T_{i_0}$ by Step \ref{a-step4}, respectively. 
Thus, the natural restriction map 
\begin{equation}\label{a-eq4.5}
(\mu_{i_0})_*\mathcal O_{Y_{i_0}}(-m\Sigma_{i_0}-aE_{i_0})
\to (\mu_{i_0})_*\mathcal O_{T_{i_0}}(-aE_{i_0})
\end{equation} 
is surjective on $U$ for every positive integer $m\geq 
a/\lambda_{i_0-1}$ such that 
$m\Sigma$ is an integral divisor. 
This is what we wanted. 
\end{step}

\begin{step}\label{a-step6} 
In this final step, we will get a contradiction 
by assuming that $(X, S+B)$ is not log canonical at $P$. 
Here, we will use Zariski's subspace theorem as in \cite{kawakita}. 

The assumption implies that $P\in \mu(\Sigma)$. 
Note that the non-log canonical locus of $(X, S+B)$ is 
$\mu(\Sigma)$ set theoretically. 
By construction, $(Y_i, T_i+\Gamma_i)$ is divisorial log 
terminal. Therefore, the non-log canonical locus of $(Y_i, T_i+\Gamma_i+\Sigma_i)$ 
is nothing but the support of $\Sigma_i$. 
Therefore, $\mu(\Sigma)=\mu_i(\Sigma_i)$ holds set theoretically for 
every $i$. 
Hence we have $P\in \mu_{i_0}(\Sigma_{i_0})$. 
\begin{claim}\label{a-claim}
Let $\mathcal O_{X,P}$ be the localization of $\mathcal O_X$ at $P$ and 
let $m_P$ denote the maximal ideal of $\mathcal O_{X, P}$. 
For every positive integer $n$, 
there exists a divisible positive integer $\nu(n)$ such that 
\begin{equation*}
(\mu_{i_0})_*\mathcal O_{Y_{i_0}}(-\nu(n)\Sigma_{i_0}-aE_{i_0})_P 
\subset m^n_P\subset \mathcal O_{X, P}
\end{equation*} 
holds, where 
$(\mu_{i_0})_*\mathcal O_{Y_{i_0}}(-\nu(n)\Sigma_{i_0}-aE_{i_0})_P$ 
denotes the localization of 
$(\mu_{i_0})_*\mathcal O_{Y_{i_0}}(-\nu(n)\Sigma_{i_0}-aE_{i_0})$ 
at $P$. 
\end{claim}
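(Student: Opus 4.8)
The plan is to deduce the Claim from Zariski's subspace theorem (Theorem~\ref{a-thm3.4}), applied to the local ring $\mathcal O_{X,P}$ and the local ring of $Y_{i_0}$ at a suitable closed point lying over $P$. First, since $aE_{i_0}$ is an effective divisor, there is an inclusion of sheaves $\mathcal O_{Y_{i_0}}(-m\Sigma_{i_0}-aE_{i_0})\hookrightarrow \mathcal O_{Y_{i_0}}(-m\Sigma_{i_0})$ for every $m$, hence
\begin{equation*}
(\mu_{i_0})_*\mathcal O_{Y_{i_0}}(-m\Sigma_{i_0}-aE_{i_0})_P\subset
(\mu_{i_0})_*\mathcal O_{Y_{i_0}}(-m\Sigma_{i_0})_P,
\end{equation*}
and the right-hand side is an ideal of $\mathcal O_{X,P}$ because $\mu_{i_0}$ is bimeromorphic and $X$ is normal. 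Thus it is enough to find, for each $n$, a divisible $\nu(n)$ such that every germ at $P$ of $(\mu_{i_0})_*\mathcal O_{Y_{i_0}}(-\nu(n)\Sigma_{i_0})$ lies in $m^n_P$. Since $P\in \mu_{i_0}(\Sigma_{i_0})$ and $\mu_{i_0}$ is projective, we may choose a point $Q\in \Sigma_{i_0}$ with $\mu_{i_0}(Q)=P$; let $V$ be an irreducible component of $\Sigma_{i_0}$ through $Q$ and let $c>0$ be the coefficient of $V$ in $\Sigma_{i_0}$.

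Next I would apply Theorem~\ref{a-thm3.4} with $R_1:=\mathcal O_{X,P}$ and $R_2:=\mathcal O_{Y_{i_0},Q}$. Here $R_1$ is analytically irreducible since $X$ is normal; $R_2$ is a noetherian local domain with the same quotient field as $R_1$, because $\mu_{i_0}$ is bimeromorphic and $Y_{i_0}$ is normal; the inclusion $R_1\subset R_2$ given by $\mu_{i_0}^*$ makes $R_2$ dominate $R_1$ because $\mu_{i_0}(Q)=P$; and $\mathrm{trdeg}_{R_1}R_2=0$. Since $P$ and $Q$ are closed points of the irreducible spaces $X$ and $Y_{i_0}$, both of dimension $\dim X$, we have $\dim R_1=\dim R_2=\dim X$ and $\mathrm{restrdeg}_{R_1}R_2=0$, so the numerical hypothesis $\dim R_1+\mathrm{trdeg}_{R_1}R_2=\dim R_2+\mathrm{restrdeg}_{R_1}R_2$ is satisfied. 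Therefore Theorem~\ref{a-thm3.4} provides a sequence of non-negative integers $a(k)$ tending to infinity with $k$ such that $R_1\cap m^k_{R_2}\subset m^{a(k)}_P$ for every $k$.

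It then remains to translate a large vanishing order along $\Sigma_{i_0}$ into a large $m_Q$-adic order. Fix $\nu$ divisible so that $\nu\Sigma$ is integral and let $f$ be a germ at $P$ of $(\mu_{i_0})_*\mathcal O_{Y_{i_0}}(-\nu\Sigma_{i_0})$; then its image $g:=\mu^*_{i_0}f$ under $R_1\subset R_2$ satisfies $\mathrm{ord}_V(g)\geq \nu c$. Since $Y_{i_0}$ is $\mathbb Q$-factorial over $W$ and $Q$ lies over $P\in W$, there is a positive integer $r$ and a local equation $s\in \mathcal O_{Y_{i_0},Q}$ of $rV$ near $Q$. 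Using that $Y_{i_0}$ is normal, so that $\mathcal O_{Y_{i_0},Q}$ is the intersection of its localizations at the height-one primes, one checks that $g/s^{\lfloor \nu c/r\rfloor}\in \mathcal O_{Y_{i_0},Q}$, and hence $g\in \bigl(s^{\lfloor \nu c/r\rfloor}\bigr)\subset m^{\lfloor \nu c/r\rfloor}_Q$. Combined with the previous paragraph this gives $f\in R_1\cap m^{\lfloor \nu c/r\rfloor}_{R_2}\subset m^{a(\lfloor \nu c/r\rfloor)}_P$. Given $n$, it now suffices to take $\nu(n)$ divisible and so large that $a(\lfloor \nu(n)c/r\rfloor)\geq n$; then every germ at $P$ of $(\mu_{i_0})_*\mathcal O_{Y_{i_0}}(-\nu(n)\Sigma_{i_0}-aE_{i_0})$ lies in $m^n_P$, which is the Claim.

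The main obstacle is this last step, the passage from divisorial order of vanishing to $m_Q$-adic order, together with the bookkeeping needed to place oneself in the hypotheses of Theorem~\ref{a-thm3.4}. In particular, one is forced to take $R_2=\mathcal O_{Y_{i_0},Q}$ at a closed point of $Y_{i_0}$ rather than at the generic point of $V$, since $\mathcal O_{X,P}$ does not inject into the local ring of $V$; it is exactly this choice that makes the dimension identity in Theorem~\ref{a-thm3.4} hold. Once the $\mathbb Q$-factoriality of $Y_{i_0}$ over $W$ and the normality of $Y_{i_0}$ are used to control $g$ near $Q$, the remaining arguments are routine.
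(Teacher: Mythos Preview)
Your argument is correct and follows the same route as the paper: choose a closed point $Q\in\Sigma_{i_0}$ over $P$, verify the hypotheses of Theorem~\ref{a-thm3.4} for $R_1=\mathcal O_{X,P}\subset R_2=\mathcal O_{Y_{i_0},Q}$, and conclude. The paper's proof is terser---it does not spell out the reduction via effectivity of $aE_{i_0}$, nor the passage from divisorial order along a component $V$ of $\Sigma_{i_0}$ to $m_Q$-adic order via $\mathbb Q$-factoriality---but these are exactly the steps hidden behind its final sentence, so your version is simply a more detailed execution of the same idea.
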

\begin{proof}[Proof of Claim] 
We take $Q\in \Sigma_{i_0}$ such that 
$\mu_{i_0}(Q)=P$. We consider 
$\mathcal O_{X, P}\hookrightarrow 
\mathcal O_{Y_{i_0}, Q}$, where 
$\mathcal O_{Y_{i_0}, Q}$ is 
the localization of $\mathcal O_{Y_{i_0}}$ at $Q$. 
It is well known that $\mathcal O_{X, P}$ is excellent. 
Therefore, $\mathcal O_{X, P}$ is analytically irreducible 
since $X$ is normal. 
Since $\mu_{i_0}\colon Y_{i_0}\to X$ is a projective 
bimeromorphic morphism, the quotient field of 
$\mathcal O_{Y_{i_0}, Q}$ coincides 
with the one of $\mathcal O_{X, P}$. We note 
that the natural map $\mathcal O_{X, P}\to \mathcal O_{Y_{i_0}, Q}/m_Q$ 
is surjective, where $m_Q$ is the maximal ideal of $\mathcal O_{Y_{i_0}, Q}$. 
Hence we can use Zariski's subspace theorem (see 
Theorem \ref{a-thm3.4}). 
Thus we get a large and divisible positive integer $\nu(n)$ 
with the desired property. 
\end{proof}
We consider the localization of 
the following restriction map 
$\mathcal O_X\simeq 
(\mu_{i_0})_*\mathcal O_{Y_{i_0}}\to (\mu_{i_0})_*\mathcal O_{T_{i_0}}$ 
at $P$. 
We put $A=\mathcal O_{X, P}$, $M=\left((\mu_{i_0})_*\mathcal O_{T_{i_0}}
\right)_P$, and $N=\left((\mu_{i_0})_*\mathcal O_{T_{i_0}}(-aE_{i_0})
\right)_P$. 
Then, by the surjection \eqref{a-eq4.5} in Step \ref{a-step5} and 
Claim, we obtain 
that $N=(0)$ by Lemma \ref{a-lem4.1} below. This is a contradiction. 
\end{step} 
Hence, we obtain that $(X, S+B)$ is log canonical at $P$. 
Since $P$ is an arbitrary point of $S$, $(X, S+B)$ is 
log canonical in a neighborhood of $S$. 
We finish the proof of Theorem \ref{a-thm1.1}. 
\end{proof} 

We used the following easy commutative algebra lemma 
in the above proof of Theorem \ref{a-thm1.1}. 

\begin{lem}\label{a-lem4.1}
Let $(A, \mathfrak m)$ be a noetherian 
local ring, let $M$ be a finitely generated $A$-module, and let 
$\varphi\colon A\to M$ be a homomorphism of 
$A$-modules. Let $I_1\supset I_2\supset \cdots 
\supset I_k\supset \cdots$ be a chain of 
ideals of $A$ such that there exists $\nu(n)$ satisfying 
$I_{\nu(n)}\subset \mathfrak m^n$ for every positive integer $n$. 
Let $N$ be an $A$-submodule of $M$. 
Assume that $\varphi(I_k)=N$ holds for every positive integer $k$. 
Then we have $N=(0)$. 
\end{lem}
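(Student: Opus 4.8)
The plan is to exploit the descending chain condition together with Nakayama's lemma. First I would observe that the hypothesis $\varphi(I_k)=N$ for \emph{every} $k$, combined with $I_1\supset I_2\supset\cdots$, forces a key monotonicity: since $I_{k+1}\subset I_k$ we automatically have $\varphi(I_{k+1})\subset\varphi(I_k)$, but the hypothesis says both equal $N$, so nothing degenerates there. The real content is that $N$ is simultaneously the image of arbitrarily deep ideals. So for each $n$, using $I_{\nu(n)}\subset\mathfrak m^n$, I would deduce
\begin{equation*}
N=\varphi(I_{\nu(n)})\subset\varphi(\mathfrak m^n)=\mathfrak m^n\varphi(A)\subset\mathfrak m^n M.
\end{equation*}
Hence $N\subset\bigcap_{n\geq 1}\mathfrak m^n M$.

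Next I would invoke the Krull intersection theorem in the module form: for a finitely generated module $M$ over a noetherian local ring $(A,\mathfrak m)$, one has $\bigcap_{n\geq 1}\mathfrak m^n M=(0)$. This is standard (it follows from the Artin--Rees lemma applied to the submodule $\bigcap_n\mathfrak m^nM$, or directly from Nakayama after showing $\mathfrak m\cdot\bigcap_n\mathfrak m^nM=\bigcap_n\mathfrak m^nM$). Combining with the displayed inclusion gives $N\subset\bigcap_{n}\mathfrak m^nM=(0)$, i.e. $N=(0)$, which is exactly the assertion.

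The only step that requires any care is the equality $\varphi(\mathfrak m^n)=\mathfrak m^n\varphi(A)$, which is immediate because $\varphi$ is an $A$-module homomorphism: $\varphi\bigl(\sum a_i x_i\bigr)=\sum a_i\varphi(x_i)$ with $a_i\in\mathfrak m^n$, $x_i\in A$. Everything else is a direct citation of the Krull intersection theorem. In fact there is no genuine obstacle here; the lemma is a formal consequence of the two standard facts above, and the whole proof is three lines once one notices that the descending chain of ideals is only used to produce the inclusion $I_{\nu(n)}\subset\mathfrak m^n$, after which the chain structure itself plays no further role. I would present it in that order: (1) reduce to $N\subset\mathfrak m^nM$ for all $n$ via the module homomorphism property and the hypothesis $\varphi(I_{\nu(n)})=N$; (2) apply Krull's intersection theorem to conclude $N=(0)$.
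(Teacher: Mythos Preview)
Your proof is correct and follows essentially the same approach as the paper: both show $N\subset\mathfrak m^nM$ for all $n$ via $N=\varphi(I_{\nu(n)})\subset\varphi(\mathfrak m^n)\subset\mathfrak m^nM$, and then conclude by the Krull intersection theorem $\bigcap_n\mathfrak m^nM=(0)$. The only cosmetic difference is that the paper argues element-by-element while you phrase it at the level of submodules.
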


\begin{proof}
Let $b$ be any element of $N$. 
Then we can take $a\in I_{\nu(n)}\subset 
\mathfrak m^n$ such that $\varphi(a)=b$. 
This implies that $b=\varphi(a)\in \mathfrak m^nM$. 
Hence $b\in \mathfrak m^nM$ holds for every 
positive integer $n$. 
Thus we obtain $b\in \bigcap _n \mathfrak m^nM=(0)$. 
Therefore, $b=0$ holds, that is, 
$N=(0)$. 
\end{proof}

We close this short note with a remark. 

\begin{rem}\label{a-rem4.2} 
If $(X, S+B)$ is algebraic in Theorem \ref{a-thm1.1}, 
then we do not need \cite{fujino-minimal}. 
It is sufficient to use the minimal model program at 
the level of \cite{bchm}, the well-known relative Kawamata--Viehweg 
vanishing theorem, and Zariski's subspace theorem 
(see, for example, \cite[(10.6)]{abhyankar}). 
Our proof given here is longer than Kawakita's one 
(see \cite{kawakita}). 
However, it looks more accessible 
for the experts of the minimal model program 
since the argument is more or less standard.
\end{rem}

\begin{ack}\label{a-ack} 
The author would like to thank Masayuki Kawakita very much for answering 
his questions. He also would like to thank 
Shunsuke Takagi very much for answering his questions and giving 
him many fruitful comments. 
Finally, he thanks the referee for useful suggestions and 
comments. 
He was partially 
supported by JSPS KAKENHI Grant Numbers 
JP19H01787, JP20H00111, JP21H00974, JP21H04994. 
\end{ack}

%%%%%%%%%%%%%%%%%%%%

\end{document}